\title{On the packing numbers in graphs}
\author {
Doost Ali Mojdeh and Babak Samadi\thanks{Corresponding author}\\
Department of Mathematics\\
University of Mazandaran, Babolsar, Iran\\
{\tt damojdeh@umz.ac.ir}\\
{\tt samadibabak62@gmail.com}\vspace{3mm}\\
Abdollah Khodkar\\
Department of Mathematics\\
University of West Georgia\\
Carrollton, GA 30118 USA\\
{\tt akhodkar@westga.edu}\vspace{3mm}\\
Hamid Reza Golmohammadi\\
Department of Mathematics\\
University of Tafresh, Tafresh, IRI\\
{\tt h.golmohamadi@tafreshu.ac.ir}\vspace{3mm}\\
}
\date{}
\newtheorem{theorem}{Theorem}[section]
\theoremstyle{definition}
\newtheorem{rem}[theorem]{Remark}
\begin{document}

\maketitle

\begin{abstract}
\noindent In this paper, we find upper bounds on the open packing and $k$-limited packing numbers with emphasis on the cases $k=1$ and $k=2$. We solve the problem of characterizing all connected graphs on $n$ vertices with $\rho_{o}(G)=n/\delta(G)$ which was presented in 2015 by Hamid and Saravanakumar. Also, by establishing a relation between the $k$-limited packing number and double domination number we improve two upper bounds given by Chellali and Haynes in 2005.
\vspace{1mm}\\
{\bf Keywords:} domination in graphs, tuple dominating sets in graphs, limited packing sets in graphs\vspace{1mm}\\
{\bf AMS Subject Classifications:} 05C69.
\end{abstract}

\section{Introduction}

Throughout this paper, let $G$ be a finite graph with vertex set $V=V(G)$, edge set $E=E(G)$, minimum degree $\delta=\delta(G)$ and maximum degree $\Delta=\Delta(G)$. We use \cite{we} for terminology and notation which are not defined here. For any vertex $v \in V(G)$, $N(v)=\{u\in G\mid uv\in E(G)\}$ denotes the {\em open neighbourhood} of $v$ of $G$, and $N[v]=N(v)\cup \{v\}$ denotes its {\em closed neighbourhood}.\\
A subset $B\subseteq V(G)$ is a {\em packing} (an {\em open packing}) in $G$ if for every distinct vertices $u,v‎\in B‎$, $N[u]‎‎\cap N[v]‎=‎\emptyset‎$ ($N(u)‎‎\cap N(v)‎=‎\emptyset‎$). The {\em packing number} $\rho(G)$ ({\em open packing number} $\rho_{o}(G)$) is the maximum cardinality of a packing (an open packing) in $G$. These concepts have been studied in \cite{hs,mm}, and elsewhere.\\
 In \cite{hh}, Harary and Haynes introduced the concept of tuple domination numbers. Let $1\leq k\leq \delta(G)+1$. A set $D\subseteq V(G)$ is a {\em $k$-tuple dominating set} in $G$ if $|N[v]\cap D|\geq k$, for all $v\in V(G)$. The {\em $k$-tuple domination number}, denoted $\gamma_{\times k}(G)$, is the
smallest number of vertices in a $k$-tuple dominating set. In fact, the authors showed that every graph $G$ with $\delta \geq k-1$ has a $k$-tuple dominating set and hence a $k$-tuple domination number. When $k=2$, $‎\gamma‎_{‎\times‎2}‎‎(G)$ is called {\em double domination number} of $G$. For the special case $k=1$, $‎\gamma‎_{‎\times1}‎‎(G)=\gamma(G)$ is the well known domination number (see \cite{hhs}). The concept of tuple domination has been studied by several authors including \cite{gghr,msh}. In general, the reader can find a comprehensive information on various domination parameters in \cite{cfhv} and \cite{hhs}.\\
Gallant et al. \cite{gghr} introduced the concept of $k$-limited packing in graphs and exhibited some real-world applications of it to network security, market saturation and codes. A set of vertices $B \subseteq V$ is called a {\em $k$-limited packing set} in $G$ if $|N[v] \cap B| \leq k$ for all $v \in V$, where $k\geq 1$. The {\em $k$-limited packing number}, $L_{k}(G)$, is the largest number of vertices in a $k$-limited packing set. When $k=1$ we have $L‎_{1}(G)‎=‎\rho(G)‎$.\\
In this paper, we find upper bounds on the $k$-limited packing numbers. In Section $2$, we prove that $2(n-\ell+s\delta^{*})/(1+\delta^{*})$ is a sharp upper bound on $L_{2}(G)$ for a connected graph $G$ on $n\geq3$ vertices, where $\ell$, $s$ and $\delta^{*}=\delta^{*}(G)$ are the number of pendant vertices, the number of support vertices and $\min\{\deg(v)\mid v\ \mbox{is not a pendant vertex}\}$, respectively. Also, we give an upper bound on $L_{k}(G)$ (with characterization of all graphs attaining it) in terms of the order, size and $k$. In Section 3, we exhibit a solution to the problem of characterizing all connected graphs of order $n\geq2$ with $\rho_{o}(G)=n/\delta(G)$ posed in \cite{hsa}. Moreover, we prove that $\gamma‎_{‎\times‎2}‎‎(G)+‎\rho(G)‎‎\leq n-‎\delta(G)+2‎‎$ when $\delta(G)\geq2$. This improves two results in \cite{ch} given by Chellali and Haynes, simultaneously.


\section{Main results}

The $2$-limited packing number of $G$ has been bounded from above by $2n/(\delta(G)+1)$ (see \cite{msh}, as the special case $k=2$). We present the following upper bound which works better for all graphs with pendant vertices, especially trees. First, we recall that a support vertex is called a {\em weak support vertex} if it is adjacent to just one pendant vertex.

\begin{theorem}
Let $G$ be a connected graph of order $n\geq3$ with $s$ support vertices and $\ell$ pendant vertices. Then,
$$L_{2}(G)\leq \frac{2(n-\ell+s\delta^{*}(G))}{1+\delta^{*}(G)}$$
and this bound is sharp. Here $\delta^{*}(G)$ is the minimum degree taken over all vertices which are not pendant vertices.
\end{theorem}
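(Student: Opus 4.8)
The plan is to start from a maximum $2$-limited packing set $B$ with $|B|=L_{2}(G)$ and run a \emph{degree-sensitive} double count over the non-pendant vertices only. Write $L$ for the set of pendant vertices and $S$ for the set of support vertices; since $G$ is connected with $n\ge 3$, no two pendant vertices can form a component, so every support vertex is non-pendant and $S\subseteq V\setminus L$. I would partition $B$ into three parts: $B_{L}=B\cap L$, the support vertices $B\cap S$, and the remaining ``interior'' vertices $B_{NS}$ that are neither pendant nor support. For a support vertex $w$ let $p(w)$ be its number of pendant neighbours, $d'(w)=\deg(w)-p(w)$ its number of non-pendant neighbours, and $b(w)$ the number of its pendant neighbours lying in $B$; thus $\sum_{w\in S}b(w)=|B_{L}|$ and $\sum_{w\in S}p(w)=\ell$.

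The first step is the inequality $\sum_{v\in V\setminus L}|N[v]\cap B|\le 2(n-\ell)$, which holds because each of the $n-\ell$ non-pendant vertices contributes at most $2$. Switching the order of summation rewrites the left-hand side as $\sum_{u\in B}|N[u]\cap(V\setminus L)|$. The key point, and the step I expect to be the main obstacle, is how to bound this from below: for an interior vertex one uses $|N[u]\cap(V\setminus L)|=\deg(u)+1\ge\delta^{*}+1$, while a pendant vertex $u\in B$ contributes exactly $1$. For a support vertex $w\in B$ one must \emph{not} discard its true degree, since $|N[w]\cap(V\setminus L)|=\deg(w)+1-p(w)=d'(w)+1$; the pendant neighbours dropped from the count contribute precisely $p(w)$ to the degree, so retaining the honest degree of $w$ is exactly what cancels the loss incurred by excluding pendant vertices. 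Replacing $\deg(w)$ by $\delta^{*}$ here would be fatally lossy, as a star shows a support vertex in $B$ may have arbitrarily many pendant neighbours while $B$ stays small.

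Assembling these estimates gives
$$|B_{L}|+\sum_{w\in S\cap B}(d'(w)+1)+(\delta^{*}+1)|B_{NS}|\le 2(n-\ell).$$
Solving for $(\delta^{*}+1)|B_{NS}|$, substituting into $(1+\delta^{*})|B|=(1+\delta^{*})(|B_{L}|+|S\cap B|+|B_{NS}|)$, and simplifying reduces the target inequality to the local claim
$$\delta^{*}|B_{L}|+\sum_{w\in S\cap B}\bigl(\delta^{*}-d'(w)\bigr)\le 2s\delta^{*},$$
which I would prove term by term over $S$. Using the packing constraint at $w$, the $b(w)$ pendant neighbours of $w$ in $B$, together with $w$ itself when $w\in B$, all lie in $N[w]\cap B$, so $b(w)\le 2$ in general and $b(w)\le 1$ when $w\in B$. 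Two cases then settle it: if $w\notin B$ the contribution $\delta^{*}b(w)\le 2\delta^{*}$, while if $w\in B$ then $\delta^{*}b(w)+\delta^{*}-d'(w)\le 2\delta^{*}$ using $b(w)\le 1$ and $d'(w)\ge 0$. Summing over the $s$ support vertices (recalling $\delta^{*}|B_{L}|=\sum_{w\in S}\delta^{*}b(w)$) yields the desired bound.

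For sharpness I would exhibit the stars $K_{1,m}$, where $n=m+1$, $\ell=m$, $s=1$, and $\delta^{*}=m$, so the bound equals $2=L_{2}(K_{1,m})$. The weak/strong support distinction recalled before the statement is precisely the quantity $p(w)$ one tracks in the case $w\in B$ above, and it is what one would use to describe when equality propagates to graphs beyond the stars.
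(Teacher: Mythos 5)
Your proof is correct, and it takes a genuinely different route from the paper's. The paper proceeds by graph augmentation: it builds $G'$ from $G$ by attaching a second pendant vertex to every weak support vertex, uses a maximality/exchange argument to assume that a maximum $2$-limited packing $B'$ of $G'$ contains two pendant neighbours of each support vertex (forcing all support vertices and the remaining pendants outside $B'$), double-counts the cut $[B',V(G')\setminus B']$ from both sides, and transfers the bound back to $G$ via the identities $s'=s$, $n'-\ell'=n-\ell$ and $\delta^{*}(G')=\delta^{*}(G)$. You instead work directly in $G$ with a closed-neighbourhood incidence count restricted to non-pendant vertices, $\sum_{v\in V\setminus L}|N[v]\cap B|\le 2(n-\ell)$, and I verified your bookkeeping: the switch of summation, the exact contributions $1$ for pendant members of $B$ and $d'(w)+1$ for support vertices $w\in B$, the reduction of the target to $\delta^{*}|B_L|+\sum_{w\in S\cap B}(\delta^{*}-d'(w))\le 2s\delta^{*}$, and the per-support-vertex verification via $b(w)\le 2$ (and $b(w)\le 1$, $d'(w)\ge 0$ when $w\in B$) are all sound, as is your remark that crudely replacing $\deg(w)$ by $\delta^{*}$ at support vertices would fail on stars. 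What each approach buys: yours is self-contained and avoids both the auxiliary graph and the paper's normalization step (``we may always assume that $B'$ contains two pendant vertices at each support vertex''), which in the paper requires a swap whose preservation of the packing property is asserted rather than checked; moreover your argument localizes the slack at each support vertex, which is the natural starting point for an equality analysis. The paper's augmentation, in exchange, makes the extremal structure more transparent, since in $G'$ every support vertex saturates its packing constraint with pendant vertices, and it recycles the standard cut-counting template used elsewhere in the paper (e.g.\ in the proof of the bound $\rho_{o}(G)\le n/\delta(G)$). Your sharpness witness $K_{1,m}$ coincides with the paper's $K_{1,n-1}$.
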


\begin{proof}
Let $\{u_{1},\ldots,u_{s_{1}}\}$ be the set of weak support vertices in $G$. Let $G'$ be the graph of order $n'$ formed from $G$ by adding new vertices $v_{1},\ldots,v_{s_{1}}$ and edges $u_{1}v_{1},\ldots,u_{s_{1}}v_{s_{1}}$ to $G$ (we note that $G=G'$ if $G$ has no weak support vertex). Clearly
\begin{equation}\label{EQ13}
s'=s,n'=n+s_{1}\ \mbox{and}\ \ell'=\ell+s_{1}
\end{equation}
in which $s'$ and $\ell'$ are the number of support vertives and pendant vertices of $G'$, respectively. Moreover, since $n\geq3$ and $G$ is a connected graph, $G$ and $G'$ have the same set of vertices of degree at least two. Therefore,
\begin{equation}\label{EQU1}
\delta^{*}(G')=\delta^{*}(G)=\delta^{*}.
\end{equation}
Let $B'$ be a maximum $2$-limited packing in $G'$. Suppose to the contrary that there exists a support vertex $u$ in $G'$ for which $|N[u]\cap B'|\leq1$. Thus, there exists a pendant vertex $v\notin B'$ adjacent to $u$. It is easy to see that $B'\cup\{v\}$ is a $2$-limited packing in $G'$ which contradicts the maximality of $B'$. So, we may always assume that $B'$ contains two pendant vertices at each support vertex. This implies that all support vertices and the other $\ell_{u}-2$ pendant vertices for each support vertex $u$ belong to $V(G')\setminus B'$, in which $\ell_{u}$ is the number of pendant vertices adjacent to $u$.  Moreover, these pendant vertices have no neighbors in $B'$. Therefore,

\begin{equation}\label{EQ14}
|[B',V(G')\setminus B']|\leq2(n'-|B'|-\ell'+2s').
\end{equation}
On the other hand, each pendant vertex in $B'$ has exactly one neighbor in $V(G')\setminus B'$ and each of the other vertices in $V(G')\setminus B'$ has at least $\delta^{*}(G')-1$ neighbors in $B'$. Therefore,
\begin{equation}\label{EQ15}
(|B'|-2s')(\delta^{*}(G')-1)+2s'\leq|[B',V(G')\setminus B']|.
\end{equation}
Together inequalities (\ref{EQ14}) and (\ref{EQ15}) imply that
\begin{equation}\label{EQ16}
|B'|\leq \frac{2(n'-\ell'+s'\delta^{*}(G'))}{1+\delta^{*}(G')}.
\end{equation}
We now let $B$ be a maximum $2$-limited packing in $G$. Clearly, $B$ is a $2$-limited packing in $G'$, as well. Thus, $|B|\leq|B'|$. By (\ref{EQ13}),(\ref{EQU1}) and (\ref{EQ16}) we have
\begin{equation*}
L_{2}(G)=|B|\leq|B'|\leq \frac{2(n-\ell+s\delta^{*})}{1+\delta^{*}},
\end{equation*}
as desired.\\
To show that the upper bound is sharp, we consider the star $K_{1,n-1}$, for $n\geq3$, with $L_{2}(K_{1,n-1})=2$.
\end{proof}

It is easy to see that $L_{k}(G)=n$ if and only if $k\geq \Delta(G)+1$. So, in what follows we may always assume that $k\leq \Delta(G)$ when we deal with $L_{k}(G)$.\\
The following theorem provides an upper bound on $L‎_{k}(G)‎$ of a graph $G$ in terms of its order, size and $k$. Also, we bound $\rho_{o}(G)$ from above just in terms of the order and size.\\
First, we define $\Omega$ and $\Sigma$ to be the families of all graphs $G$ having the following properties, respectively.\vspace{1mm}\\
($p_{1}$)\ There exists a clique $S$ such that $G[V(G)\setminus S]$ is $(k-1)$-regular and every vertex in $S$ has exactly $k$ neighbors in $V(G)\setminus S$.\\
($p_{2}$)\ There exists a clique $S$ such that $G[V(G)\setminus S]$ is a disjoint union of copies of $K_{2}$ and every vertex in $S$ has exactly one neighbor in $V(G)\setminus S$.

\begin{theorem}
Let $G$ be a graph of order $n$ and size $m$. If $k\leq2(n-\sqrt{n^{2}-n-2m})‎‎$ or $\delta(G)\geq k-1$, then
$$L‎_{k}(G‎)‎\leq‎ n+‎k/2‎‎‎‎-‎\sqrt{‎k^2/4+(1-k)n+2m‎}$$
with equality if and only if $G\in \Omega$.\\
Furthermore, $\rho_{o}(G)\leq n-\sqrt{2m-n}$ for any graph $G$ with no isolated vertex. The bound holds with equality if and only if $G\in \Sigma$.
\end{theorem}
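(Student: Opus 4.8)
The unifying idea is a three-way edge count. For both parts I would fix a maximum witness set --- a maximum $k$-limited packing $B$ for the first bound, a maximum open packing $B$ for the second --- write $s=n-|B|$, and split the $m$ edges as $m=e(B)+e(B,V\setminus B)+e(V\setminus B)$, where $e(X)$ counts edges inside $X$ and $e(X,Y)$ counts edges between $X$ and $Y$. The defining packing condition translates into an upper bound on each of these three quantities, and summing them produces a quadratic inequality in $s$ whose extremal solution is exactly the claimed bound.

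For the first inequality the condition $|N[v]\cap B|\le k$ gives three estimates. Taking $v\in B$ shows $v$ has at most $k-1$ neighbours inside $B$, so $G[B]$ has maximum degree at most $k-1$ and $e(B)\le (k-1)|B|/2$. Taking $v\in V\setminus B$ gives $|N(v)\cap B|\le k$, so summing over $V\setminus B$ yields $e(B,V\setminus B)\le ks$, and trivially $e(V\setminus B)\le\binom{s}{2}$. Adding these and substituting $|B|=n-s$ I would obtain, after routine simplification, $s^{2}+ks+(k-1)n-2m\ge 0$. The roots of the associated quadratic are $s_{\pm}=\tfrac12\bigl(-k\pm\sqrt{k^{2}-4(k-1)n+8m}\bigr)$; since $k\ge 1$ the smaller root $s_{-}$ is strictly negative while $s\ge 0$, so the inequality forces $s\ge s_{+}$, which rearranges exactly to the stated bound on $L_{k}(G)=|B|$. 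This is where the hypothesis enters: each of ``$k\le 2(n-\sqrt{n^{2}-n-2m})$'' and ``$\delta\ge k-1$'' ensures that the radicand $k^{2}/4+(1-k)n+2m$ is nonnegative, which is precisely what makes $s_{+}$ real so that the comparison $s\ge s_{+}$ is meaningful.

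The equality analysis then follows from the three estimates: $L_{k}(G)$ attains the bound if and only if all three are simultaneously tight, that is, $G[B]$ is $(k-1)$-regular, every vertex of $V\setminus B$ has exactly $k$ neighbours in $B$, and $V\setminus B$ is a clique. Setting $S=V\setminus B$ this is exactly property $(p_{1})$, so $G\in\Omega$. Conversely, for $G\in\Omega$ I would verify that $B=V\setminus S$ is itself a $k$-limited packing and compute $2m=(k-1)n+s^{2}+ks$, which makes the quadratic vanish and forces $|B|$ to equal the bound; combined with $L_{k}(G)\ge|B|$ this gives equality.

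The second bound is the same argument with the neighbourhood count capped at $1$. Since $B$ is an open packing, no vertex of $G$ has two neighbours in $B$, so $G[B]$ has maximum degree at most one and $e(B)\le|B|/2$, $e(B,V\setminus B)\le s$, and $e(V\setminus B)\le\binom{s}{2}$. Summing gives $2m\le n+s^{2}$, and since $G$ has no isolated vertex we have $2m\ge n$, whence $s\ge\sqrt{2m-n}$ directly, with no root-selection subtlety; this yields $\rho_{o}(G)=|B|\le n-\sqrt{2m-n}$. Equality forces $G[B]$ to be $1$-regular (a disjoint union of $K_{2}$'s), every vertex of $S=V\setminus B$ to have exactly one neighbour in $B$, and $S$ to be a clique, which is property $(p_{2})$, so $G\in\Sigma$; the converse is checked by the same direct computation. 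I expect the main obstacle to lie not in the inequalities but in the equality bookkeeping: one must confirm that the three tightness conditions translate precisely into the defining properties of $\Omega$ and $\Sigma$ with $S=V\setminus B$, and that in the converse directions the proposed set $B=V\setminus S$ is genuinely a maximum packing of the required type.
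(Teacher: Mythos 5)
Your proposal is correct and takes essentially the same route as the paper: the identical three-way edge decomposition $m=e(B)+e(B,V\setminus B)+e(V\setminus B)$ with the same three per-part estimates, yielding the same quadratic inequality $2m\leq (k-1)|B|+2k(n-|B|)+(n-|B|)(n-|B|-1)$, the same use of the hypothesis to keep the radicand nonnegative, and the same tightness analysis identifying $S=V\setminus B$ with properties $(p_{1})$ and $(p_{2})$. Your explicit root-selection argument and the converse computation $2m=(k-1)n+ks+s^{2}$ merely spell out details the paper leaves implicit (it instead picks a minimum-size clique $S$, which your computation shows is unnecessary), and your open-packing part matches the argument the paper summarizes as ``similar to the case $k=1$.''
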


\begin{proof}
Let $L$ be a maximum $k$-limited packing set in $G$ and let $E(G[L])$ and $E(G[V\setminus L])$ be the edge set of subgraphs of $G$ induced by $L$ and $V\setminus L$, respectively. Clearly,
\begin{equation}\label{EQU0}
m=|E(G[L])|+|[L,V(G)\setminus L]|+|E(G[V\setminus L])|.
\end{equation}
Therefore,
\begin{equation}\label{EQ6}
2m\leq(k-1)|L|+2k(n-|L|)+(n-|L|)(n-|L|-1)‎.
\end{equation}
Solving the above inequality for $|L|$ we obtain
$$L_{k}(G)=|L|‎\leq‎\dfrac{2n+k-‎\sqrt{k^2+4(1-k)n+8m}‎}{2}‎‎,$$
as desired (note that $k\leq2(n-\sqrt{n^{2}-n-2m})‎‎$ or $\delta(G)\geq k-1$ implies that $‎k^2/4+(1-k)n+2m\geq0$).\\
We now suppose that the equality in the upper bound holds. Therefore $|E(G[L])|=(k-1)|L|$, $|[L,V(G)\setminus L]|=k(n-|L|)$ and $|E(G[V(G)\setminus L])|=(n-|L|)(n-|L|-1)$, by (\ref{EQ6}). This shows that $V(G)\setminus L$ is a clique satisfying the property ($p_{1}$). Thus, $G\in \Omega$. Conversely, suppose that $G\in \Omega$. Let $S$ be a clique of the minimum size among all cliques having the property ($p_{1}$). Then, it is easy to see that $L=V(G)\setminus S$ is a $k$-limited packing for which the upper bound holds with equality.\\
The proof of the second result is similar to the proof of the first one when $k=1$.
\end{proof}


\section{The special case $k=1$}

Hamid and Saravanakumar \cite{hsa} proved that
\begin{equation}\label{EQ17}
\rho_{o}(G)\leq\frac{n}{\delta(G)}
\end{equation}
for any connected graph $G$ of order $n\geq2$. Moreover, the authors
characterized all the regular graphs which attain the above bound. In general, they posed the problem of characterizing all connected graphs of order $n\geq2$ with equality in (\ref{EQ17}).
We solve this problem in this section. For this purpose, we define the family $\Gamma$ containing all graphs $G$ constructed as follows. Let $H$ be disjoint union of $t\geq1$ copies of $K_{2}$. Join every vertex $u$ of $H$ to $k$ new vertices as its private neighbors lying outside $V(H)$. Let $V=V(H)\cup(\cup_{u\in V(H)}pn(u))$, in which $pn(u)$ is the set of neighbors (private neighbors) of $u$ which lies outside $V(H)$. Add new edges among the vertices in $\cup_{u\in V(H)}pn(u)$ to construct a connected graph $G$ on the set of vertices in $V=V(G)$ with $\deg(v)\geq k+1$, for all $v\in \cup_{u\in V(H)}pn(u)$. Clearly, every vertex in $V(H)$ has the minimum degree $\delta(G)=k+1$ and every vertex in $\cup_{u\in V(H)}pn(u)$ has exactly one neighbor in $V(H)$.\\
We are now in a position to present the following theorem.
\begin{theorem}\label{THEO1}
Let $G$ be a connected graph of order $n\geq2$. Then, $\rho_{o}(G)=\frac{n}{\delta(G)}$ if and only if $G\in \Gamma$.
\end{theorem}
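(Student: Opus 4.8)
The plan is to prove both directions of the equivalence, treating the harder ``only if'' direction by analyzing exactly when the inequality $\rho_o(G)\le n/\delta(G)$ becomes tight. First I would establish the easy direction: given $G\in\Gamma$, I exhibit an explicit maximum open packing. The natural candidate is $B=V(H)$, the vertices of the $t$ disjoint copies of $K_2$. Since each $u\in V(H)$ has its own set of private neighbors $pn(u)$ lying outside $V(H)$, distinct vertices of $H$ that are not paired in the same $K_2$ have disjoint open neighborhoods; I must check the two endpoints of a single $K_2$ as well, but their open neighborhoods meet only if they share a common neighbor, which the private-neighbor construction forbids. This shows $B$ is an open packing of size $2t=|V(H)|$. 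Because $\delta(G)=k+1$ and $n=2t+2t\cdot k = 2t(k+1)$ (each of the $2t$ vertices of $H$ contributing $k$ private neighbors), I get $|B|=2t=n/(k+1)=n/\delta(G)$, so equality holds.

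For the ``only if'' direction, I would suppose $\rho_o(G)=n/\delta(G)$ and reverse-engineer the structure. The standard proof of (\ref{EQ17}) counts edges between a maximum open packing $B$ and its complement, or counts incidences via $\sum_{v\in B}\deg(v)\le$ (something bounded by the open-packing condition). Equality in such a counting argument forces every inequality used to be tight. I would pin down precisely which quantities must be extremal: every vertex of $B$ should have degree exactly $\delta(G)$, the open neighborhoods $\{N(v):v\in B\}$ should partition or nearly partition $V(G)$, and each vertex outside $B$ should lie in exactly the right number of these neighborhoods. Carefully unpacking the open-packing condition $N(u)\cap N(v)=\emptyset$ together with the tight count should show that the vertices of $B$ come in adjacent pairs (forcing the $K_2$ structure of $H=G[B]$) and that every vertex of $B$ has all but one of its neighbors as \emph{private} neighbors outside $B$, with $\delta(G)=k+1$ giving exactly $k$ such private neighbors per vertex. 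This reconstructs the defining data of $\Gamma$.

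The main obstacle I anticipate is the combinatorial bookkeeping that forces $G[B]$ to be a disjoint union of $K_2$'s rather than, say, an independent set or larger components. I would handle this by examining, for each $v\in B$, how the $\delta(G)$ edges at $v$ distribute between $B$ and $V(G)\setminus B$: the open-packing condition prevents two vertices of $B$ from sharing an external neighbor, so the external neighborhoods are disjoint, and the equality in the degree/counting bound leaves no room for a vertex of $B$ to have more than one neighbor inside $B$ (else the disjointness of neighborhoods and the exact count $n=\delta(G)\cdot|B|$ would be violated). Ruling out isolated vertices of $G[B]$ requires the connectivity of $G$ and the condition $n\ge 2$; a vertex of $B$ with no neighbor in $B$ would have all $\delta(G)$ neighbors outside, and I would derive a contradiction with the tightness of the global count. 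Once $G[B]\cong tK_2$ is established, identifying $H$ with $G[B]$ and the external neighbors with the private-neighbor sets $pn(u)$, together with verifying $\deg(w)\ge k+1$ for $w\notin B$ (which follows from $\delta(G)=k+1$), completes the identification $G\in\Gamma$.
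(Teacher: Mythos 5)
Your proposal is correct and follows essentially the same route as the paper: the edge count $(\delta(G)-1)|B|\le |[B,V(G)\setminus B]|\le n-|B|$ (using that every vertex of $G$ has at most one neighbor in an open packing $B$), with tightness in both inequalities forcing $G[B]\cong tK_{2}$, exactly $\delta(G)-1$ external private neighbors per vertex of $B$, and exactly one $B$-neighbor per outside vertex, together with the direct verification that $B=V(H)$ is an open packing of size $2t=n/\delta(G)$ when $G\in\Gamma$. One small correction: ruling out isolated vertices of $G[B]$ needs no appeal to connectivity---your tightness argument alone suffices, since a vertex of $B$ with all $\delta(G)$ neighbors outside $B$ would give $|[B,V(G)\setminus B]|\ge(\delta(G)-1)|B|+1$, hence $\delta(G)|B|\le n-1<n$, contradicting $|B|=n/\delta(G)$.
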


\begin{proof}
We first state a proof for (\ref{EQ17}).
Let $B$ be a maximum open packing in $G$. Every vertex in $V(G)$ has at most one neighbor in $B$ and hence every vertex in $B$ has at least $\delta(G)-1$ neighbors in $V(G)\setminus B$, by the definition of an open packing.
Thus,
\begin{equation}\label{EQ18}
(\delta(G)-1)|B|\leq|[B,V(G)\setminus B]|\leq n-|B|.
\end{equation}
Therefore, $\rho_{o}(G)=|B|\leq\frac{n}{\delta(G)}$.\\
Considering (\ref{EQ18}), we can see that the equality in (\ref{EQ17}) holds if and only $(\delta(G)-1)|B|=|[B,V(G)\setminus B]|$ and $|[B,V(G)\setminus B]|=n-|B|$. Since, $B$ is an open packing, this is equvalent to the fact that $H=G[B]$ is a disjoint union of $t=|B|/2$ copies of $K_{2}$, in which every vertex has the minimum degree and is adjacent to $k=\delta(G)-1$ vertices in $V(G)\setminus B$ and each vertex in $V(G)\setminus B$ has exactly one neighbor in $B$. Now, it is easy to see that the equality in (\ref{EQ17}) holds if and only $G\in \Gamma$.
\end{proof}

\begin{rem}
Similar to the proof of Theorem \ref{THEO1} we have $\rho(G)\leq n/(\delta(G)+1)$, for each connected graph $G$ of order $n$. Furthermore, the characterization of graphs $G$ attaining this bound can be obtained in a similar fashion by making some changes in $\Gamma$. It is sufficient to consider $H$ as a subgraph of $G$ with no edges in which every vertex has exactly $\delta(G)$ private neighbors lying outside $V(H)$.
\end{rem}

In \cite{ch}, Chellali and Haynes proved that for any graph $G$ of order $n$ with $‎\delta(G)‎\geq2‎‎$,
$$\gamma‎_{‎\times‎2}‎‎(G)+‎\rho(G)‎‎\leq n.$$
Also, they proved that
$$\gamma‎_{‎\times‎2}‎‎(G)\leq n-\delta(G)+1$$
for any graph $G$ with no isolated vertices.\\
We note that the second upper bound is trivial for $\delta(G)=1$. So, we may assume that $\delta(G)\geq2$. In the following theorem, using the concepts of double domination and $k$-limited packing, we improve these two upper bounds, simultaneously.

\begin{theorem}\label{Th.gamma2.rho}
Let $G$ be a graph of order $n$. If $\delta(G)\geq2$, then
$$\gamma‎_{‎\times‎2}‎‎(G)+‎\rho(G)‎‎\leq n-‎\delta(G)+2.‎‎$$
Furthermore, this bound is sharp.
\end{theorem}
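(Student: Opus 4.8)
The plan is to turn a maximum packing into an explicit double dominating set of controlled size, exploiting the defining property of a packing as the bridge between $\rho(G)$ and $\gamma_{\times 2}(G)$. Concretely, I would let $B$ be a maximum packing of $G$, so $|B|=\rho(G)$, and record the decisive observation: since the closed neighbourhoods $\{N[u]:u\in B\}$ are pairwise disjoint, every vertex $w\in V(G)$ satisfies $|N[w]\cap B|\le 1$. This is the ``relation'' between packings and domination that drives everything.

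From here the idea is to produce a double dominating set by deleting $B$ together with a handful of extra vertices. First I would note that $V(G)\setminus B$ is already double dominating, since for every $w$,
$$|N[w]\cap (V(G)\setminus B)| = (\deg(w)+1)-|N[w]\cap B| \ge (\delta+1)-1 = \delta \ge 2,$$
which merely reproves $\gamma_{\times 2}(G)+\rho(G)\le n$. The gain comes from observing that each closed neighbourhood carries $\delta-2$ units of slack. So I would choose \emph{any} set $R'\subseteq V(G)\setminus B$ with $|R'|=\delta-2$ and put $D=V(G)\setminus (B\cup R')$. For every $w$,
$$|N[w]\cap D| \ge (\deg(w)+1)-|N[w]\cap B|-|N[w]\cap R'| \ge (\delta+1)-1-(\delta-2)=2,$$
using $\deg(w)\ge\delta$, the packing bound $|N[w]\cap B|\le 1$, and the trivial estimate $|N[w]\cap R'|\le |R'|$. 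Hence $D$ is a double dominating set with $|D| = n-\rho(G)-(\delta-2)$, yielding $\gamma_{\times 2}(G)\le n-\rho(G)-\delta+2$, as desired.

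The one point genuinely requiring justification is that $\delta-2$ vertices outside $B$ actually exist, i.e.\ $|V(G)\setminus B|\ge \delta-2$. I would argue this by fixing $u\in B$ and noting that each neighbour of $u$ must lie in $V(G)\setminus B$ (otherwise two vertices of $B$ would share a vertex of their closed neighbourhoods), so $|V(G)\setminus B|\ge \deg(u)\ge\delta$. The subtlety I would flag is that the central inequality is \emph{tight}: for a minimum-degree vertex meeting $B$ the slack is exactly $\delta-2$, so the argument leans precisely on the crude bound $|N[w]\cap R'|\le |R'|$ being sufficient. This is why the particular choice of $R'$ is immaterial, and it is the only place where the proof would break if $\delta-2$ were enlarged; I expect this tightness check to be the main conceptual hurdle rather than any computation. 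Finally, for sharpness I would present the complete graph $K_n$, where $\rho(K_n)=1$, $\delta=n-1$, and $\gamma_{\times 2}(K_n)=2$, so that $\gamma_{\times 2}(K_n)+\rho(K_n)=3=n-\delta+2$.
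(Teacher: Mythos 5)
Your proof is correct, and it reaches the bound by a genuinely more direct route than the paper. The paper works through the machinery of $k$-limited packings in two modular steps: first it shows that if $B$ is a maximum $(\delta-1)$-limited packing then $V(G)\setminus B$ is double dominating, giving $\gamma_{\times 2}(G)+L_{\delta-1}(G)\le n$; second it proves the general monotonicity fact $L_{k+1}(G)\ge L_{k}(G)+1$ for $1\le k\le\Delta(G)$ (add any vertex outside a maximum $k$-limited packing, after checking such a vertex exists), and iterates it to get $L_{\delta-1}(G)\ge\rho(G)+\delta(G)-2$. You collapse this chain into a single count: pad a maximum packing $B$ with an arbitrary set $R'$ of $\delta-2$ outside vertices and verify directly, via $|N[w]\cap D|=(\deg(w)+1)-|N[w]\cap B|-|N[w]\cap R'|\ge(\delta+1)-1-(\delta-2)=2$, that $D=V(G)\setminus(B\cup R')$ is double dominating. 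In fact your set $B\cup R'$ is exactly a $(\delta-1)$-limited packing of size $\rho(G)+\delta(G)-2$, so your argument is the paper's proof with the limited-packing vocabulary unwound; your existence check $|V(G)\setminus B|\ge\deg(u)\ge\delta$ for $u\in B$ plays the role of the paper's verification that a maximum $k$-limited packing is proper. What the paper's decomposition buys is reusable intermediate content: the inequality $\gamma_{\times 2}(G)+L_{\delta-1}(G)\le n$ is a strictly stronger statement than the theorem (being the ``relation between limited packing and double domination'' the abstract advertises), and the monotonicity lemma is of independent interest; what your version buys is brevity and self-containedness, needing only the characterization $|N[w]\cap B|\le 1$ of packings. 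Your sharpness example $K_n$ matches the paper's, though you should state $n\ge 3$ so that $\delta(K_n)\ge 2$.
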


\begin{proof}
Let $B$ be a maximum $‎(\delta(G)-1)‎$-limited packing set in $G$.
Every vertex in $B$ has at most $‎\delta(G)-2‎$ neighbours in $B$.
Therefore it has at least two neighbours in $V(G)\setminus B$.
On the other hand, every vertex in $V(G)\setminus B$ has at most $‎\delta(G)-1‎$
neighbours in $B$, hence it has at least one neighbour in $V(G)\setminus B$.
This implies that $V(G)\setminus B$ is a double dominating set in $G$.
Therefore,
\begin{equation}\label{EQ1}
‎\gamma‎_{‎\times‎2}‎‎(G)+L‎_{‎\delta(G)‎-1}(G)‎‎\leq n.‎
\end{equation}
Now let $1\leq k\leq\Delta(G)$ and let $B$ be a maximum $k$-limited
packing set in $G$. Then $|N[v]‎\cap B‎|‎\leq k‎$, for all $v‎\in V(G)‎$.
We claim that $B‎\neq V(G)‎$. If $B=V(G)$ and $u‎\in V(G)‎$ such that $\deg(u)=‎\Delta(G)‎$,
then $‎\Delta(G)+1=|N[u]‎\cap B‎|‎\leq k‎\leq ‎\Delta(G)‎‎‎‎$, a contradiction.
Now let $u‎\in V(G)\setminus B‎$. It is easy to check that $|N[v]‎\cap (B‎\cup \{u\}‎)‎|‎\leq k+1‎$,
for all $v‎\in V(G)‎$. Therefore $B‎\cup \{u\}‎$ is a $(k+1)$-limited packing set in $G$. Hence
$$L‎_{k+1}(G)‎‎\geq |B‎\cup \{u\}‎|‎=|B|+1=L‎_{k}(G)‎+1,$$
for $k=1, \ldots ,‎\Delta(G)$‎. Applying this inequality repeatedly leads to
$$L‎_{‎\delta‎-1}(G)‎\geq L‎_{1}‎‎‎(G)+‎\delta(G)-2=‎\rho(G)‎‎+‎\delta(G)-2‎.$$
Hence, $‎\gamma‎_{‎\times‎2}‎‎(G)+‎\rho(G)‎‎\leq n-‎\delta(G)+2‎‎$ by (\ref{EQ1}).
Finally, the upper  bound is sharp for the complete graph $K‎_{n}‎$ with $n\geq3$
\end{proof}


\end{document}